\newtheorem{theorem}{Theorem}[section]
\newtheorem{example}[theorem]{Example}
\newtheorem{conjecture}[theorem]{Conjecture}
\newtheorem{problem}[theorem]{Problem}
\newtheorem{lemma}[theorem]{Lemma}
\newtheorem{corollary}[theorem]{Corollary}
\begin{document}

\title{  \ \\*[-0.25in] Sign patterns with minimum rank 3 and point-line configurations}

\author{ Guangming Jing$^b$, Wei Gao$^b$, Yubin Gao$^a$, Fei Gong$^b$, \\ Zhongshan Li$^{\rm a, b}$\thanks{Corresponding author. Email: zli@gsu.edu}, 
 Yanling Shao$^a$, Lihua Zhang$^b$  \\
$^a$ {Dept.  of Math, North University of China, Taiyuan, Shanxi, China  }\\
 $^b$ {Dept of Math \& Stat, Georgia State University, Atlanta, GA 30302, USA}  }

%\date{}

\maketitle

\begin{abstract} A \emph{sign pattern (matrix)} is a matrix whose entries are from the set  $\{+, -, 0\}$. The \emph{minimum rank} (respectively, \emph{rational minimum rank}) of a sign pattern matrix $\cal A$ is the minimum of the ranks of the real (respectively, rational) matrices whose entries have signs equal to the corresponding entries of $\cal A$. A sign pattern $\cal A$ is said to be \emph{condensed} if $\cal A$ has no zero row or column and no two rows or columns are identical or negatives of each other. In this paper, a new direct connection between condensed $m \times n $ sign patterns with minimum rank $r$ and $m$ point--$n$ hyperplane configurations in ${\mathbb R}^{r-1}$ is established. In particular, condensed sign patterns with minimum rank 3 are closed related to point--line configurations on the plane. It is proved  that for any sign pattern $\cal A$ with minimum rank $r\geq 3$, if the number of zero entries on each column of $\cal A$ is at most $r-1$, then the rational minimum rank of $\cal A$ is also $r$. Furthermore, we construct the smallest known sign pattern whose minimum rank is 3 but whose rational minimum rank is greater than 3.
\end{abstract}

%%%%%%%%%%%%%%%%%%%%%%%%%%%%%%%%%%% Section 1

\section{Introduction}

In combinatorial matrix theory, there has been a lot of interests in the study of sign pattern matrices during the last 50 years (see \cite{Bru95} and \cite{Hall13} and the extensive references therein). A matrix whose entries are from the set $\{+, -, 0\}$ is called a {\it sign pattern (matrix)}. For a real matrix $B$, sgn($B$) is the sign pattern matrix obtained by replacing each positive (respectively, negative, zero) entry of B by + (respectively, $-, 0$). The {\it sign pattern class} (also known as the { \it qualitative class}) of a sign pattern matrix $\cal A$, denoted $Q(\cal A)$, is defined as
$$ Q({\cal A})= \{ A : A \text{ is a real matrix and }\mbox{sgn}(A) = \cal A \}.$$
A square sign pattern ${\cal A}$ is said to be  {\it sign nonsingular} if every matrix $B \in Q({\cal A})$ is nonsingular.

For a sign pattern matrix $\cal A$, the {\it minimum rank} of $\cal A$, denoted mr$(\cal A)$, is the minimum of the ranks of the real matrices in $Q(\cal A)$. Similarly, the {\it maximum rank} of $\cal A$, denoted MR$(\cal A)$, is the maximum of the ranks of the real matrices in $Q(\cal A)$. The {\it rational minimum rank} of a sign pattern $\cal A$, denoted mr$_{\mathbb Q}(\cal A)$, is defined to be the minimum of the ranks of the rational matrices in $Q(\cal A)$. The minimum ranks of sign pattern matrices have been the focus of considerable research in recent years, see for example \cite{Arav05}-\cite{Berman08}, \cite{Bru10}-\cite{Gra96}, and \cite{Hall04}-\cite{Raz10}. 

A square $n \times n$ sign pattern is called a {\it permutation sign pattern} if each row and column contains exactly one $+$ entry and  $n-1$ zero entries.  Two $m\times n$ sign pattern matrices ${\cal A}_1$ and ${\cal A}_2$ are said to be {\it permutationally equivalent} if there exist permutation sign patterns $P_1$ and $P_2$ such that ${\cal A}_2 =P_1 {\cal A}_1 P_2$. A {\it signature sign pattern} is a square diagonal sign pattern all of whose diagonal entries are nonzero. Two $m\times n$ sign patterns ${\cal A}_1$ and ${\cal A}_2$ are said to be {\it diagonally equivalent} or {\it signature equivalent}  if there exist signature sign patterns $D_1$ and $D_2$ such that ${\cal A}_2 =D_1 {\cal A}_1 D_2$. It is easy to observe that if two sign patterns are permutationally equivalent or diagonally equivalent, they have the same minimum rank and the same rational minimum rank.

Consider a nonzero sign pattern $\cal A$. If $\cal A$ contains a zero row or column, then deletion of the zero row or zero column preserves the  minimum rank. Similarly, if two nonzero rows (or two nonzero columns) of $\cal A$ are either identical or are negatives of each other, then deleting such a row (or column) also preserves the minimum rank.

Following  \cite{Li13}, we say that a sign pattern is a \emph{condensed sign pattern} if does not contain a zero row or zero column and no two rows or two columns are identical or are negatives of each other. Clearly, given any nonzero sign pattern $\cal A$, we can delete zero, duplicate or opposite rows and columns of $\cal A$ to get a condensed sign pattern matrix ${\cal A}_c$ (called the condensed sign pattern of $\cal A$). For consistency, we agree that when two rows are identical or opposite, we delete the lower one and when two columns are identical or opposite, we delete the one on the right. For a zero sign pattern $\cal A$, the condensed sign pattern ${\cal A}_c =\emptyset $, the empty matrix, which has minimum rank 0.

Obviously, $\text{mr} (\cal A) \leq  \text{mr}_{\mathbb Q} (\cal A)$ for every sign pattern $\cal A$.  In \cite{Arav05}, \cite{Arav09} and \cite{Arav13}, several classes of sign patterns $\cal A$ such that  $\text{mr} (\cal A) = \text{mr}_{\mathbb Q} (\cal A)$ are identified, such as when $\cal A$ is entrywise nonzero, or the minimum rank of $\cal A$ is at most 2, or $\mbox{MR}({\cal A})-\mbox{mr}({\cal A}) \leq 1$, or the minimum rank of $\cal A$ is at least $\min\{m, n\}-2$, where $\cal A$ is $m \times n$.

However, it has been shown in \cite{KP08} and \cite{Berman08} that there exist sign patterns $\cal A$ for which $\text{mr} ({\cal A}) \leq  \text{mr}_{\mathbb Q} (\cal A)$. In particular, \cite{KP08} showed the  existence of a $12\times 12$ sign pattern with  $\text{mr} ({\cal A}) =3$ but $  \text{mr}_{\mathbb Q} ({\cal A}) >3.$ 

In this paper, we establish a new direct connection between condensed $m \times n $ sign patterns with minimum rank $r  \ (r\geq 2)$ and $m$ point--$n$ hyperplane configurations in ${\mathbb R}^{r-1}$. Then we use the matrix factorization that guarantees this connection to prove that if the number of zero entries on each column of a sign pattern $\cal A$ with minimum rank $r$ is at most $r-1$ , then $\text{mr} (\cal A) = \text{mr}_{\mathbb Q} (\cal A)$. Furthermore, we construct the smallest known sign pattern whose minimum rank is 3 but whose rational minimum rank is greater than 3.
We note that as shown in the next section, rational realizability of the minimum rank for sign patterns with minimum rank 3 is closed related to the central problem of rational realizability of certain point--line configurations on the plane \cite{Grunbaum09}.

%%%%%%%%%%%%%%%%%%% Section 2 

\section{point--hyperplane configuration}

We now establish a direct connection between $m \times n $ condensed sign patterns with minimum rank $r$ ($r\geq 2$) and $m$ point--$n$ hyperplane  configurations in ${\mathbb R}^{r-1}$.

To create this connection, we need the following lemma. 

\begin{lemma}\label{lem:B=UV}
Let $\cal A$ be an $m\times n$ condensed sign pattern with mr$({\cal A})= r \ge 2$. Then there are suitable signature sign patterns $D_1$ and $D_2$, such that there is a real matrix $B \in Q(D_1{\cal {A}}D_2)$ with $\mbox{rank}(B)=r$ such that $B=UV$, where $U$ is $m\times r$, V is $r\times n$, and
$$ U= \begin{bmatrix} 1 & u_{12} & \cdots & u_{1r}  \\
1 & u_{22} & \cdots & u_{2r}  \\
 \vdots & \vdots & \vdots & \vdots \\
 1 & u_{m2} & \cdots & u_{mr}  \end{bmatrix},
\quad \mbox{and } \ V=\begin{bmatrix}
 v_{11} & v_{12} & \cdots & v_{1n} \\
 v_{21} & v_{22} & \cdots & v_{2n} \\
 \vdots & \vdots & \vdots & \vdots \\
  v_{r-1,1} & v_{r-1,2} & \cdots & v_{r-1,n} \\
1 &  1  & \cdots & 1 			\end{bmatrix}.  $$	

\end{lemma}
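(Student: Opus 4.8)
The plan is to start from an arbitrary rank-$r$ realization and massage it into the normalized form by a single change of basis followed by diagonal row and column scalings. Since $\mr(\mathcal A)=r$, I would pick $A\in Q(\mathcal A)$ with $\rank(A)=r$ and a full-rank factorization $A=XY$, where $X$ is $m\times r$ and $Y$ is $r\times n$. Because $\mathcal A$ is condensed it has no zero row or column, so every row $x_i^T$ of $X$ and every column $y_{\cdot j}$ of $Y$ is nonzero; this is what will let me normalize the desired $1$'s.

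The core step is to find an invertible $G\in\mathbb{R}^{r\times r}$ such that, after replacing $X$ by $X'=XG$ and $Y$ by $Y'=G^{-1}Y$ (which leaves $A=X'Y'$ unchanged), the first column of $X'$ and the last row of $Y'$ both have all nonzero entries. The first column of $X'$ is $Xg_1$, where $g_1$ is the first column of $G$, and the last row of $Y'$ is $f^TY$, where $f^T$ is the last row of $G^{-1}$; since $r\ge 2$ these obey the compatibility relation $f^Tg_1=(G^{-1}G)_{r1}=0$. I would first choose $g_1$ off the finitely many hyperplanes $x_i^\perp$ (forcing $x_i^Tg_1\ne 0$ for all $i$) and off the finitely many lines $\mathbb{R}y_{\cdot j}$; such a $g_1$ exists because a finite union of proper subspaces cannot exhaust $\mathbb{R}^r$. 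Then, inside $g_1^\perp$, I would choose $f$ with $y_{\cdot j}^Tf\ne 0$ for all $j$: this is possible exactly because $y_{\cdot j}\notin\mathbb{R}g_1$ gives $g_1^\perp\not\subseteq y_{\cdot j}^\perp$, so each condition $y_{\cdot j}^Tf=0$ cuts out only a proper subspace of $g_1^\perp$. Finally, since $f^Tg_1=0$, I complete $g_1$ and $f^T$ to an invertible $G$ with first column $g_1$ and last row of $G^{-1}$ equal to $f^T$ (extend $g_1$ to a basis of $f^\perp$ for the first $r-1$ columns, and take a last column $c_r$ with $f^Tc_r=1$).

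With $G$ fixed, let $a_i\ne 0$ be the first entry of the $i$th row of $X'$ and $b_j\ne 0$ the last entry of the $j$th column of $Y'$. Scaling by the invertible diagonal matrices $S=\mbox{diag}(a_1^{-1},\dots,a_m^{-1})$ and $T=\mbox{diag}(b_1^{-1},\dots,b_n^{-1})$, I set $U=SX'$, $V=Y'T$, and $B=SAT=UV$. Then $U$ has first column $\mathbf{1}_m$, $V$ has last row $\mathbf{1}_n$, and $\rank(B)=\rank(A)=r$ because $S$ and $T$ are invertible. Moreover $\mbox{sgn}(B)=D_1\mathcal A D_2$ with $D_1=\mbox{diag}(\mbox{sgn}\,a_1,\dots,\mbox{sgn}\,a_m)$ and $D_2=\mbox{diag}(\mbox{sgn}\,b_1,\dots,\mbox{sgn}\,b_n)$, which are signature sign patterns, so $B\in Q(D_1\mathcal A D_2)$, as required.

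I expect the main obstacle to be the core step: arranging, with one change of basis, that the first column of $X'$ and the last row of $Y'$ are simultaneously nowhere zero. The subtlety is the forced orthogonality $f^Tg_1=0$, which prevents selecting $g_1$ and $f$ independently; the resolution is to also push $g_1$ off the lines $\mathbb{R}y_{\cdot j}$ from the outset, which is precisely what stops any constraint $y_{\cdot j}^Tf=0$ from consuming all of $g_1^\perp$. Geometrically, a column $y_{\cdot j}$ parallel to $g_1$ would correspond to a column of $B$ equal to a constant multiple of $\mathbf{1}$, and this is the only genuinely obstructive case, which the line-avoidance eliminates.
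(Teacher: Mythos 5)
Your proof is correct, and the high-level strategy is the same as the paper's: take a full-rank factorization of a rank-$r$ realization, apply a change of basis in the middle so that the first column of the left factor and the last row of the right factor are nowhere zero, then normalize by diagonal scalings whose signs are absorbed into the signature patterns $D_1$ and $D_2$. Where you genuinely differ is in how the change of basis is produced. The paper builds it as a product $Q$ of plane rotations $R(\theta_k;1,k)$, chosen iteratively so that the first coordinate of each (nonzero) row of $U_0Q^T$ and the last coordinate of each column of $QV_0$ become nonzero; since $Q$ is orthogonal, $Q^{-1}=Q^T$, and the compatibility constraint between ``first column of the new left factor'' and ``last row of the new right factor'' is hidden in the orthogonality of the rows of $Q$. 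You instead use a single general linear $G$, make that constraint explicit as $f^Tg_1=(G^{-1}G)_{r1}=0$, and resolve it by a subspace-avoidance argument: requiring $g_1\notin \mathbb{R}y_{\cdot j}$ in addition to $x_i^Tg_1\ne 0$ is exactly what prevents any condition $y_{\cdot j}^Tf=0$ from exhausting $g_1^{\perp}$, and the completion of $g_1$ and $f$ to a valid $G$ is handled correctly (first $r-1$ columns spanning $f^{\perp}$, last column with $f^Tg_r=1$). Your version is arguably more self-contained and pinpoints the one genuine obstruction (a column of $Y$ parallel to $g_1$, which would force a constant column after normalization), whereas the paper's inductive rotation argument is stated more informally; both deliver exactly the same factorization, and neither yields anything the other does not.
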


\begin{proof}

Let $B_0 \in Q(\cal A)$ with $\mbox{rank}(B_0) = r$. Then there exist an $m \times r$ matrix $U_0$ and an $r \times n$ matrix $V_0$ such that $B_0 = U_0V_0$. Because $\cal A$ is a condensed sign pattern, no two rows of $U_0$ are linearly dependent and no two columns of $V_0$ are linearly dependent.

Since there are only finitely many rows of $U_0$ and finitely many columns of $V_0$, there is a suitable rotation matrix $R(\theta_2 ;1,2)$ of order $r$ such that

(i) if the first two components of the $i$th row of $U_0$ are not both zero, then the first component of the $i$th row of $U_0R(\theta_2 ;1,2)^T$ is nonzero, and

(ii) if the first two components of the $j$th column of $V_0$ are not both zero, then the first component of the $j$th column of $R(\theta_2 ;1,2)V_0$ is nonzero, for all $i$ and $j$.

Continuing in this fashion, we can find suitable $\theta_3, \cdots, \theta_r $ such that for each $k$, $2\leq k \leq r$,

(i) if the first $k$ components of the $i$th row of $U_0$ are not all zero, then the first component of the $i$th row of $U_0 R(\theta_2 ; 1, 2)^T R(\theta_3 ; 1, 3)^T \cdots R(\theta_k ; 1, k)^T $ is nonzero, and

(ii) if the first $k$ components of the $j$th column of $V_0$ are not all zero, then the first component of the $j$th column of $ R(\theta_k ; 1, k) \cdots R(\theta_3 ; 1, 3)R(\theta_2 ; 1, 2) V_0$ is nonzero, for all $i$ and $j$.

Furthermore, $\theta_r$ may be chosen to ensure that the last component of each column of  $R(\theta_r ; 1, r)$ $\cdots R(\theta_3 ; 1, 3)R(\theta_2 ; 1, 2) V_0$ is nonzero. Since $U_0$ has no zero row, the first component of each row of $U_0 R(\theta_2 ; 1, 2)^T R(\theta_3 ; 1, 3)^T $  $\cdots R(\theta_r ; 1, r)^T $ is nonzero.

Let $Q = R(\theta_r ; 1, r) \cdots R(\theta_3 ; 1, 3)R(\theta_2 ; 1, 2)$. By replacing $B_0 = U_0V_0$ with $B = (D_1 U_0 Q^T)$  $(Q V_0 D_2)$, $U_0$ with $U = D_1 U_0 Q^T$, $V_0$ with $V = Q V_0 D_2$, and $\cal A$ with a diagonally equivalent sign pattern sgn$(D_1) {\cal {A}} \text{ sgn}(D_2)$ for some suitable nonsingular diagonal matrices $D_1$ and $D_2$ if necessary, we may assume that the first entry of each row of $U$ is 1, and the last entry of each column of $V$ is 1. Thus we arrive at the desired factorization $B = UV$.
\end{proof}

With the previous lemma, we can construct an $m$ point--$n$ line configuration in the plane for every $m \times n $ condensed sign pattern with minimum rank 3. Let $\cal A$ be an $m\times n$ condensed sign pattern with $\mbox{mr}({\cal A})=3$. Then we may assume that there is a matrix $B \in Q(\cal A)$ with $\mbox{rank}(B) = 3$ such that $B = UV$, where
$$ U= \begin{bmatrix} u_1  \\
u_2  \\
 \vdots \\
 u_m \end{bmatrix}
 =\begin{bmatrix} 1 & u_{12} & u_{13}  \\
1 & u_{22}  & u_{23}  \\
 \vdots & \vdots  & \vdots \\
 1 & u_{m2}  & u_{m3}  \end{bmatrix}
, $$
$$ V= \begin{bmatrix}
 v_1 & v_2 & \cdots & v_n 	\end{bmatrix}
=\begin{bmatrix}
 v_{11} & v_{12} & \cdots & v_{1n} \\
 v_{21} & v_{22} & \cdots & v_{2n} \\
1 &  1  & \cdots & 1 			\end{bmatrix}.  $$	

Identify the $i$th row of $U$, $(1, u_{i2}, u_{i3})$, with the point $p_i= ( u_{i2}, u_{i3})$ in $\mathbb R^2$, for $1 \leq i \leq m$. Identify the $j$th column of $V$, $(v_{1j}, v_{2j}, 1)^T$ with the straight line $l_j$ in $\mathbb R^2$ given by the equation  $v_{1j}+x v_{2j}+y = 0$, for $1 \leq j \leq n$. After taking such identifications, these resulting  $m$ points and $n$ lines form an $m$ point--$n$ line configuration (unlike in \cite{Grunbaum09}, here and throughout the paper, we use the word configuration in the sense of a solid figure, with no further starting assumptions) in the plane that satisfies

(i) \phantom{i} $b_{ij} > 0$ if and only if the point $p_i$ is above the line $l_j$;

(ii)\phantom{i} $b_{ij} = 0$ if and only if the point $p_i$ is on the line $l_j$;

(iii)  $b_{ij} < 0$ if and only if the point $p_i$ is below the line $l_j$.

Conversely, every $m$ point--$n$ line configuration in the Euclidean plane $\mathbb R^2$ gives rise to an $m \times n $ sign pattern with minimum rank at most 3.
Let $C$ be a configuration in the plane with $m$ labeled points $p_1, p_2, \cdots, p_m$ and $n$ labeled lines $l_1, l_2, \cdots, l_n$. By taking a suitable rotation (whose effect on the resulting sign pattern will be explained later) if necessary,  we may assume that there is no vertical line in $C$.

Let ${\cal A}=[{\bold a}_{ij}]$ be an $m \times n $ sign pattern such that

(i) \phantom{i} ${\bold a}_{ij} = +$ if and only if the point $p_i$ is above the line $l_j$;

(ii) \phantom{i}${\bold a}_{ij} = 0$ if and only if the point $p_i$ is on the line $l_j$;

(iii) ${{\bold a}}_{ij} = -$ if and only if the point $p_i$ is below the line $l_j$.

\noindent Then $\cal A$ is an $m \times n$ sign pattern corresponding to $C$ and $\mbox{mr}({\cal A}) \leq 3$. Indeed, by interpreting each point $p_i=(u_{i2}, u_{i3})$ as a row $[1,u_{i2}, u_{i3}]$ of an $m\times 3$ matrix $U$, and interpreting each line $l_j$ with equation $v_{1j} + v_{2j} x + y =0$ as a column $[v_{1j}, v_{2j}, 1]^T$ of a $3\times n$ matrix $V$, we obtain a real matrix $A=UV$ with rank$(A) \leq 3$ and sgn$(A)=\cal A$. 

For example, let $C$ be the following point--line configuration.

\hspace{7.3cm}
\setlength{\unitlength}{0.8mm}
\begin{picture}(100,60)

\put(-5,40){\makebox(0,0){$\bullet$}}
\put(10,30){\makebox(0,0){$\bullet$}}
\put(-10,10){\makebox(0,0){$\bullet$}}

\put(-40,10){\line(5,2){80}}
\put(-25,-5){\line(1,1){55}}
\put(-30,50){\line(1,-1){55}}

\put(-1,44){\makebox(0,0){$p_1$}}
\put(14,26){\makebox(0,0){$p_2$}}
\put(-6,7){\makebox(0,0){$p_3$}}

\put(-34,50){\makebox(0,0){$l_1$}}
\put(-29,-5){\makebox(0,0){$l_2$}}
\put(-42,7){\makebox(0,0){$l_3$}}

\put(0,-10){\makebox(0,0){Figure $2.1$}}

\end{picture}

\vspace{12mm}

\noindent Then the corresponding sign pattern matrix $\cal A$ is 
$${\cal A}= \begin{bmatrix} + & + & +  \\
+ & 0 & 0 \\
- & 0 & -
\end{bmatrix}.$$

The point $p_1$ is above all 3 lines. So on the first row of $\cal A$ there are 3 positive signs. Similarly, we can get the second row and the third row.

It is useful to think of each line in a point--line configuration on the plane to be directed (namely, oriented), so that the $(i,j)$-entry of the resulting sign pattern is $+$ (respectively, $-, 0$) if and only if  the point $p_i$ is on the left (respectively,  right, inside) of the line $l_j$. Then it is clear that reversing the direction of a line in the configuration corresponds to negating a column of the resulting sign pattern. Further, for convenience, we usually assume that each non-vertical line is pointing to the right and each vertical line is pointing upward. 

However, in view of the incidence and orientation preserving dual transform $D_0$ (see \cite{Matou02}) that sends every nonzero point $a\in \mathbb R^2$ to the line  $h_a=D_0(a)= \{ x \in \mathbb R^2 : \langle a, x \rangle = 1\}$ and also sends the line  $h_a$ (which does not pass though the origin) to the point $a$, it is more natural to orient every line not passing through the origin in the clockwise direction, relative to the origin. Of course, the transform $D_0$ maps an $m$ point--$n$ line configuration on the plane to an $n$ point--$m$ line configuration, with their corresponding sign patterns being transposes of each other (assuming all the lines are oriented clockwise relative to the origin). This transform is also defined in $\mathbb R^d$, and the negative side of the hyperplane $h_a=D_0(a)= \{ x \in \mathbb R^d : \langle a, x \rangle = 1\}$ is the halfspace containing the origin. 

Assuming that all nonvertical lines are oriented to point to the right, and the vertical lines are oriented upward, then 
it is clear that any translation will preserve the resulting sign pattern matrix of a point--line configuration. Thus we can translate the configuration to a position so that none of the points in the configuration is the origin and none of the lines in the configuration passes through the origin. It is then apparent that for such a point--line configuration, any rotation of the point--line configuration through the origin preserves the resulting sign pattern. It can be seen that more generally,  
 if two point--line configurations can be obtained from each other through rotation and translation, then their resulting sign patterns are equivalent (through  permutation and signature equivalence).  We say that two point--line configurations are {\it equivalent} if their resulting sign patterns are equivalent.

Furthermore, in order that a point--line configuration on the plane produces a condensed sign pattern, further conditions must be met. It is easy to see that an  $m$ point--$n$ line configuration $C$ results in a condensed sign pattern if and only if the following four  conditions are satisfied for the points and lines in $C$. 
\begin{enumerate}
\item No two points in $C$ have identical or opposite relative positions (above, below, or on) relative to all the $n$ lines in $C$.

\item No two lines in $C$ have the same or opposite relative  positions relative to all the $m$ points in $C$.

\item No point in $C$ is on all the $n$ lines in $C$.
\item No line in $C$ passes through all the $m$ points in $C$.
\end{enumerate}
Such a point--line configuration is said to be {\it simple}. Obviously, a simple point--line configuration gives rise to a sign pattern with minimum rank 1 if and only if it has exactly 1 point and 1 line.

In \cite{Li13}, sign patterns with minimum rank 2 are characterized.

\begin{theorem} \label{thm:mr2char1} \emph{\cite{Li13}} A sign pattern  matrix $\cal A$ has minimum rank 2 if and only if  its condensed sign pattern ${\cal A}_c$ satisfies the following conditions:
\begin{enumerate}
  \item[(i) ] ${\cal A}_c$ has at least two rows and two columns,
  \item[(ii)]  each row and each column of ${\cal A}_c$ has at most one zero entry, and
  \item[(iii)] there are signature sign patterns $D_1$ and $D_2$  and permutation sign patterns $P_1$ and $P_2$ such that each row and each column of $P_1D_1 {\cal A}_c D_2P_2$ is nondecreasing.
 \end{enumerate}
\end{theorem}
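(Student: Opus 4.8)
The plan is to prove both directions by reducing everything to the additive realization $b_{ij}=x_i+y_j$, which is precisely what Lemma~\ref{lem:B=UV} delivers in the case $r=2$. Throughout I use that minimum rank is unchanged by deleting zero, duplicate, or opposite rows and columns, and is invariant under signature and permutation equivalence, so it suffices to establish (i)--(iii) for the condensed pattern $\mathcal A_c$.

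\textbf{Only if.} Suppose $\mr(\mathcal A)=2$, so $\mr(\mathcal A_c)=2$. Applying Lemma~\ref{lem:B=UV} with $r=2$, I obtain signature patterns $D_1,D_2$ and a rank-$2$ matrix $B=UV\in Q(D_1\mathcal A_cD_2)$ in which the first column of $U$ and the last row of $V$ are all ones. Setting $x_i:=u_{i2}$ and $y_j:=v_{1j}$, a one-line computation gives $b_{ij}=x_i+y_j$. Since $\mathcal A_c$ is condensed and signature equivalence preserves this, the $x_i$ must be pairwise distinct (equal $x_i=x_{i'}$ would make rows $i,i'$ of $B$ identical) and likewise the $y_j$ are distinct. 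I would then let $P_1,P_2$ sort the rows by increasing $x_i$ and the columns by increasing $y_j$; since $b_{ij}=x_i+y_j$ is increasing in each of $x_i$ and $y_j$, the matrix $P_1BP_2$ is nondecreasing along every row and column, which yields (iii). Distinctness of the $x_i$ and $y_j$ means $x_i+y_j=0$ for at most one $j$ in each row and one $i$ in each column, giving (ii), and rank $2$ forces at least two rows and columns, giving (i).

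\textbf{If.} Conversely, assume (i)--(iii). After signature and permutation equivalence I may assume $\mathcal A_c$ is itself nondecreasing in every row and column, with at most one zero per row and column and at least two rows and columns; each row then reads $-\cdots-\,[0]\,+\cdots+$ with at most one $0$. I would build an additive realization directly: put $y_j=j$ for all $j$, and for each row $i$ choose a threshold $t_i$ equal to the index of its unique zero entry if one exists, and otherwise a half-integer separating its block of $-$'s from its block of $+$'s; set $x_i=-t_i$. Then $\operatorname{sgn}(b_{ij})=\operatorname{sgn}(j-t_i)$ reproduces row $i$ exactly, so $\operatorname{sgn}(B)=\mathcal A_c$, while $B=UV$ with $U$ having rows $(x_i,1)$ and $V$ having columns $(1,y_j)^{T}$ shows $\rank(B)\le 2$, hence $\mr(\mathcal A_c)\le 2$. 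To rule out rank $1$, I note that a rank-$1$ realization $cd^{T}$ of a condensed pattern would force every (nonzero) row to equal $\pm\operatorname{sgn}(d^{T})$, so any two rows would be identical or opposite, contradicting (i) together with condensedness; therefore $\mr(\mathcal A_c)=2$ and $\mr(\mathcal A)=2$.

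The main obstacle is the explicit construction in the \emph{if} direction: one must check that the two monotonicities let all rows be realized simultaneously on a single common scale $y_j=j$, i.e.\ that the thresholds $t_i$ can be placed without collision. The key enabling fact is that condensedness (equivalently, (ii)) forces the $y_j$ to be distinct and limits each row and column to a single zero, so the thresholds fall into the gaps or onto the zero positions consistently; this is also exactly the point that makes the additive model equivalent to the simultaneous row-and-column sortability asserted in (iii).
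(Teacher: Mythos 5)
Your proof is correct, and it takes essentially the route the paper itself indicates: the paper states Theorem~\ref{thm:mr2char1} without proof (citing \cite{Li13}), but remarks that the proof uses the factorization of Lemma~\ref{lem:B=UV}, which for $r=2$ is exactly the additive model $b_{ij}=x_i+y_j$ (a point--point configuration on $\mathbb R^1$) on which both directions of your argument rest. Your necessity argument (distinctness of the $x_i$ and $y_j$ from condensedness, then sorting) and your threshold construction for sufficiency (which, as you could note, needs no cross-row compatibility since each $t_i$ is chosen independently, plus the rank-one exclusion via condensedness) are sound.
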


In the above theorem, we say a row or the column of a sign pattern is {\it nondecreasing} if the $-$ entries appear before the 0 entries  and all 0 entries appear before the $+$ entries.

From the proof of the preceding theorem in \cite{Li13} using a similar factorization as in Lemma  \ref{lem:B=UV}, it is easy to see that a sign pattern with minimum rank 2 corresponds to a point--point configuration on the line $\mathbb R^1$; such a configuration can be regarded as a degenerate point--line configuration $C$ on the plane in which  all the lines in the configuration $C$ are parallel to each other 
(or equivalently, when all the points in the configuration $C$ are collinear (but the line containing all the points may not be in $C$)). Thus we have the following result.

\begin{theorem} \label{thm:mr2char}
A simple point--line configuration $C$ on the plane gives rise to a sign pattern with minimum rank 2 if and only if it satisfies the following three conditions:
\begin{enumerate}
\item[(i) ] in $C$ there are at least 2 points and 2 lines;
\item[(ii)] each point in $C$  is on at most one  line in $C$ and each line in $C$ passes through at most 1 point in $C$; and 
 \item[(iii)]  $C$ is equivalent to a simple point--line configuration $C_1$  all of whose lines are parallel to each other. 
 \end{enumerate}
\end{theorem}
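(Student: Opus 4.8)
The plan is to route everything through the correspondence between simple point--line configurations and condensed sign patterns that has just been established, and then read the three conditions off from the known minimum-rank-$2$ characterization in Theorem~\ref{thm:mr2char1}. Since $C$ is simple, its resulting sign pattern ${\cal A}$ is condensed, so ${\cal A}_c = {\cal A}$ and $\mbox{mr}(C) = \mbox{mr}({\cal A})$; moreover, configuration equivalence is by definition permutation and signature equivalence of the resulting sign patterns, so the operation $P_1 D_1 (\cdot) D_2 P_2$ appearing in Theorem~\ref{thm:mr2char1} corresponds exactly to passing to an equivalent configuration. It therefore suffices to match conditions (i)--(iii) of Theorem~\ref{thm:mr2char1} against conditions (i)--(iii) here. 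Conditions (i) and (ii) are routine dictionary translations: (i) of Theorem~\ref{thm:mr2char1} says ${\cal A}$ has at least two rows and two columns, i.e. $C$ has at least $2$ points and $2$ lines; and since the $(i,j)$ entry of ${\cal A}$ is $0$ exactly when $p_i$ lies on $l_j$, a zero in row $i$ records a line through $p_i$ and a zero in column $j$ records a point on $l_j$, so ``at most one zero per row and per column'' is literally ``each point is on at most one line and each line passes through at most one point.''

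The substance is condition (iii), and I would prove the two implications separately. For the forward direction, suppose $\mbox{mr}({\cal A}) = 2$. Applying the $r=2$ analogue of Lemma~\ref{lem:B=UV}, I obtain a rank-$2$ matrix $B = UV \in Q(D_1 {\cal A} D_2)$ in which the rows of $U$ have the form $(1, u_{i2})$ and the columns of $V$ have the form $(v_{1j}, 1)^{T}$, so that $b_{ij} = u_{i2} + v_{1j}$. I would then lift this one-dimensional point--point datum to the plane by placing $p_i$ at height $u_{i2}$ and taking $l_j$ to be the horizontal line $y = -v_{1j}$; since $p_i$ is above $l_j$ iff $u_{i2} + v_{1j} > 0$, this planar configuration $C_1$ has resulting sign pattern $D_1 {\cal A} D_2$ and consists entirely of parallel (horizontal) lines. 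Because ${\cal A}$ is condensed the numbers $u_{i2}$ are pairwise distinct and likewise the $v_{1j}$, so $C_1$ is a genuine simple configuration equivalent to $C$, which is (iii).

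For the converse, assume $C$ is equivalent to a simple configuration $C_1$ all of whose lines are parallel. After a rotation, which preserves the resulting sign pattern up to equivalence and respects the no-vertical-line convention, I may take all lines of $C_1$ horizontal, say $l_j : y = c_j$. Then the sign of $p_i = (x_i, y_i)$ against $l_j$ is $\mbox{sgn}(y_i - c_j)$, depending on $p_i$ only through $y_i$; interpreting $p_i$ as the row $(1, y_i)$ and $l_j$ as the column $(-c_j, 1)^{T}$ exhibits a factorization proving $\mbox{mr}({\cal A}) \le 2$. Finally, (i) together with simplicity forces $\mbox{mr}({\cal A}) \ge 2$, since a simple configuration has minimum rank $1$ only when it has exactly one point and one line. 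Hence $\mbox{mr}({\cal A}) = 2$.

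The main obstacle I anticipate lies entirely in condition (iii): making the lift between the one-dimensional point--point picture and the planar parallel-line picture precise, verifying that the sign correspondence $b_{ij} = u_{i2} + v_{1j}$ survives the rotation and signature normalizations, and confirming that the constructed $C_1$ is actually simple so that configuration equivalence is well defined. I would also observe that condition (ii) is in fact forced by (i) and (iii) for a simple configuration -- two points on a common line of a parallel family would share a coordinate and hence give two equal rows of ${\cal A}$, contradicting simplicity -- but listing it separately is harmless and clarifying.
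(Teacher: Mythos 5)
Your proposal is correct and follows essentially the same route the paper takes: the paper derives this theorem by observing (via the proof of Theorem~\ref{thm:mr2char1} and a rank-$2$ analogue of the factorization in Lemma~\ref{lem:B=UV}) that a minimum-rank-$2$ condensed pattern corresponds to a point--point configuration on $\mathbb{R}^1$, viewed as a degenerate planar configuration with all lines parallel. Your write-up simply makes that sketch explicit --- the $b_{ij}=u_{i2}+v_{1j}$ lift, the translation of conditions (i)--(ii), and the observation that (ii) is forced --- all of which checks out.
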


The following is such a simple configuration yielding a sign pattern with minimum rank 2.  

\hspace{7.3cm}
\setlength{\unitlength}{0.8mm}
\begin{picture}(100,60)

\put(0,0){\makebox(0,0){$\bullet$}}
\put(0,10){\makebox(0,0){$\bullet$}}
\put(0,20){\makebox(0,0){$\bullet$}}
\put(0,30){\makebox(0,0){$\bullet$}}
\put(0,40){\makebox(0,0){$\bullet$}}
\put(0,50){\makebox(0,0){$\bullet$}}

\put(-30,40){\line(1,0){60}}
\put(-30,14){\line(5,1){60}}
\put(-30,6){\line(5,-1){60}}

\put(0,-10){\makebox(0,0){Figure $2.2$}}

\end{picture}
\vspace{8mm}

%\begin{center}
%\includegraphics[height=4cm]{mr2.jpg}
%\end{center}

More generally, in a similar fashion, for every $m \times n $ sign pattern with minimum rank $r \geq 2$, we can construct an $m$ point--$n$ hyperplane configuration in ${\mathbb R}^{r-1}$ using the factorization given in Lemma \ref{lem:B=UV}. Conversely, from  an $m$ point--$n$hyperplane configuration in ${\mathbb R}^{r-1}$ (in which no hyperplane is vertical (namely, parallel to the $x_d$-axis)), we can write out an $m \times n $ sign pattern whose minimum rank is at most $r$. Of course, by saying that a point $p$ is above a hyperplane $H$ in $\mathbb R^d$, we mean that the $x_d$ coordinate of $p$ is greater than the $x_d$ coordinate of the vertical (namely, parallel to the $x_d$-axis) projection of $p$ on $H$.

We give an application of this point--hyperplane configuration approach in the proof of the following theorem. We say that a sign pattern $\cal A$ has a \emph{direct point-hyperplane representation} if a minimum rank factorization  for $\cal A$ given  in Lemma \ref{lem:B=UV} can be done with $D_1$ and $D_2$ being identity sign patterns. For example, for the sign patterns 
$$ {\cal A}_1= \bmatrix + & + &+ \\  - & + & + \\ - & 0 & + \endbmatrix  \quad \text{ and } \quad {\cal A}_2 \bmatrix + & + &+ \\  - & + & + \\ + & 0 & - \endbmatrix, $$ 
it can be easily verified that  both ${\cal A}_1$  and ${\cal A}_2$ have minimum rank 2,  ${\cal A}_1$  has a direct point-hyperplane representation, but ${\cal A}_2$ does not.

\begin{theorem} \label{thm:A_1A_2}
Let ${\cal A}_1$ and ${\cal A}_2$ be two sign patterns that have direct point-hyperplane representations. Suppose that  mr$({\cal A}_1)=r_1\geq 2$ and mr$({\cal A}_2)=r_2\geq 2$. 
Then 
$$\text{mr} \left ( \bmatrix {\cal A}_1 & + \\ - & {\cal A}_2 \endbmatrix \right ) = \max \{ r_1, r_2\}. $$ 
\end{theorem}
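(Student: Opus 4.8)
The plan is to prove the two matching inequalities. The lower bound is routine: $\mathcal{A}_1$ and $\mathcal{A}_2$ appear as the upper-left and lower-right submatrices of $\mathcal{A}=\left[\begin{smallmatrix}\mathcal{A}_1 & + \\ - & \mathcal{A}_2\end{smallmatrix}\right]$, and since the minimum rank of a submatrix never exceeds that of the whole pattern, $\mathrm{mr}(\mathcal{A})\ge\max\{r_1,r_2\}=:r$. The real content is the upper bound, for which I would exhibit a single matrix in $Q(\mathcal{A})$ of rank exactly $r$. Here I would invoke the direct-representation hypothesis: there are rank factorizations $B_1=U_1V_1\in Q(\mathcal{A}_1)$ and $B_2=U_2V_2\in Q(\mathcal{A}_2)$ in the normalized form of Lemma~\ref{lem:B=UV}, so the first column of each $U_i$ is the all-ones vector $\mathbf{1}$ and the last row of each $V_i$ is $\mathbf{1}^{T}$. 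After zero-padding the smaller factorization (inserting zero columns into the shorter $U_i$ and zero rows into the shorter $V_i$, keeping the all-ones first column in position $1$ and moving the all-ones last row to position $r$, which leaves $B_1,B_2$ unchanged), I may assume both factorizations use $r$ inner dimensions and still have all-ones first column of $U_i$ and all-ones last row of $V_i$. Geometrically this just places the lower-dimensional configuration inside $\mathbb{R}^{r-1}$ with its height axis aligned to $x_{r-1}$.

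Next I would translate the first configuration far ``up'' and the second far ``down'' along the last coordinate. In matrix terms, fix a large $M>0$ and set
$$U^{(1)}=U_1+M\mathbf{1}e_r^{T},\quad V^{(1)}=V_1-Me_1\mathbf{1}^{T},\quad U^{(2)}=U_2-M\mathbf{1}e_r^{T},\quad V^{(2)}=V_2+Me_1\mathbf{1}^{T},$$
and take $U=\begin{bmatrix}U^{(1)}\\ U^{(2)}\end{bmatrix}$, $V=\begin{bmatrix}V^{(1)} & V^{(2)}\end{bmatrix}$, so $\mathrm{rank}(UV)\le r$. Using $e_r^{T}V_i=\mathbf{1}^{T}$, $U_ie_1=\mathbf{1}$, and $e_r^{T}e_1=0$ (valid since $r\ge 2$), the two diagonal blocks collapse exactly to $U^{(1)}V^{(1)}=B_1$ and $U^{(2)}V^{(2)}=B_2$, because the $M$-terms cancel; hence these blocks carry the patterns $\mathcal{A}_1$ and $\mathcal{A}_2$. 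In the off-diagonal blocks the same terms instead reinforce, giving $U^{(1)}V^{(2)}=U_1V_2+2M\,\mathbf{1}\mathbf{1}^{T}$ and $U^{(2)}V^{(1)}=U_2V_1-2M\,\mathbf{1}\mathbf{1}^{T}$ (with all-ones matrices of the appropriate sizes). Choosing $M$ larger than the largest absolute value of any entry of $U_1V_2$ and of $U_2V_1$ makes the top-right block entrywise positive and the bottom-left block entrywise negative. Thus $UV\in Q(\mathcal{A})$ with $\mathrm{rank}(UV)\le r$, which yields $\mathrm{mr}(\mathcal{A})\le r$ and completes the proof.

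I expect the main obstacle to be the sign control in the off-diagonal blocks, and this is precisely where the direct-representation hypothesis is indispensable. The entire argument hinges on the all-ones first column of each $U_i$ and all-ones last row of each $V_i$: these are exactly what convert the cross terms into a uniform rank-one shift $\pm 2M\,\mathbf{1}\mathbf{1}^{T}$, producing a single constant sign across an entire block. Without the direct representation, the first columns would carry mixed signs, and the shift would push different rows of a cross block toward $+\infty$ and $-\infty$, destroying the required uniform $+$ or $-$ pattern; so the key verification is that the hypothesis forces the cancellation-in-the-diagonal-versus-reinforcement-in-the-off-diagonal dichotomy. A secondary bookkeeping issue is the zero-padding when $r_1\ne r_2$: it must align each configuration's height axis to the shared coordinate $x_{r-1}$ so that the shift matrices built from $e_1$ and $e_r$ act correctly on both factorizations simultaneously.
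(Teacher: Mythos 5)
Your proof is correct and follows essentially the same route as the paper: pad the smaller factorization to a common inner dimension $r=\max\{r_1,r_2\}$ and translate one configuration far above the other along the last coordinate, which the paper phrases geometrically as placing the points of $C_1$ in the highest region of $C_2$ and vice versa. Your rank-one shift computation $U^{(i)}=U_i\pm M\mathbf{1}e_r^{T}$, $V^{(i)}=V_i\mp Me_1\mathbf{1}^{T}$ is just an explicit algebraic rendering of that translation, and it correctly isolates where the direct-representation hypothesis (all-ones first column of $U_i$ and all-ones last row of $V_i$) is used.
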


\begin{proof} Let ${\cal A} = \bmatrix {\cal A}_1 & + \\ - & {\cal A}_2 \endbmatrix$. Since each of ${\cal A}_1$ and ${\cal A}_2$ is a submatrix of $\cal A$, it is obvious that mr$({\cal A}) \geq \max \{ r_1, r_2\}. $  To complete the proof, we need to show the opposite inequality. 

Without loss of generality, assume that $r_1 \leq r_2$ and let $d=r_2-1$. In a minimum rank factorization $A_1=UV$ given in Lemma \ref{lem:B=UV} of some matrix  $A_1 \in Q({\cal A}_1)$, we may insert $r_2-r_1$ zero columns in $U$ after the first column and also insert as many zero rows in $V$ after the first row. The resulting new factorization $A_1 =U_1 V_1$ can give rise to a point--hyperplane configuration $C_1$ in $\mathbb R^{d}$ that corresponds to ${\cal A}_1$.   From the hypothesis, we can also get a point--hyperplane configuration $C_2$ in $\mathbb R^{d}$ that corresponds to ${\cal A}_2$. The hyperplanes in $C_1$ divide $\mathbb R^{d}$ into connected open regions, one of which consists of all the points in $\mathbb R^d$ that are below all the hyperplanes in $C_1$. This unbounded region is called the lowest region of the arrangement of hyperplanes in $C_1$. Similarly, the arrangement of hyperplanes in $C_2$ has a highest (unbounded) region, consisting of all points in $\mathbb R^d$ that are above all the hyperplanes in $C_2$. Since translation of a configuration does not affect the resulting sign pattern, we may assume that $C_1$ is placed ``far above'' $C_2$, in the sense that all the points (of course, not the hyperplanes) of $C_1$ are in  the highest region of $C_2$, and all the points of $C_2$ are in the lowest region of $C_1$. It is then clear that the point--hyperplane configuration $C_1\cup C_2$ yields the sign pattern ${\cal A} = \bmatrix {\cal A}_1 & + \\ - & {\cal A}_2 \endbmatrix$. The fact that this representation is possible ensures that we can get a factorization of a matrix $A \in Q({\cal A})$ of the form $A=U_0 V_0$, where $U_0$ has $d+1$ columns. It follows that    mr$({\cal A}) \leq d+1 = r_2 =\max \{ r_1, r_2\}. $  This completes the proof. 
\end{proof}

Repeated application of the preceding theorem yields the following result. 

\begin{corollary}
Let ${\cal A}_1, \dots,  {\cal A}_k$ be $k$ ($k\ge 2$) sign patterns that have direct point-hyperplane representations. Suppose that  mr$({\cal A}_i)=r_i\geq 2$  for each $i, 1\leq i \le k$. 
Then 
$$\mbox{mr} \left ( \bmatrix 
{\cal A}_1 & +  & \hdots & +	\\
 - & {\cal A}_2 & \ddots & \vdots \\
\vdots & \ddots & \ddots & +  \\
- & \hdots & - & {\cal A}_k 
\endbmatrix \right ) = \max \{ r_1, \dots,  r_k\}. $$ 
\end{corollary}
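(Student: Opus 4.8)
The plan is to argue by induction on $k$, using Theorem \ref{thm:A_1A_2} as both the base case and the engine of the inductive step. The base case $k=2$ is precisely Theorem \ref{thm:A_1A_2}. For the inductive step one regards the full $k\times k$ block pattern as a single $2\times 2$ block pattern $\bmatrix {\cal B} & + \\ - & {\cal A}_k \endbmatrix$, where ${\cal B}$ is the top-left $(k-1)\times(k-1)$ block pattern built from ${\cal A}_1,\dots,{\cal A}_{k-1}$. Here every block to the right of ${\cal B}$ lies above the block diagonal and is therefore entirely $+$, and every block below ${\cal B}$ is entirely $-$, so this regrouping is legitimate. Applying Theorem \ref{thm:A_1A_2} to the pair $({\cal B},{\cal A}_k)$ then yields $\mr({\cal A})=\max\{\mr({\cal B}),r_k\}=\max\{r_1,\dots,r_k\}$, once $\mr({\cal B})=\max\{r_1,\dots,r_{k-1}\}$ is supplied by the inductive hypothesis.

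The obstacle is that Theorem \ref{thm:A_1A_2} does not merely require knowing the minimum ranks of its two arguments; it requires each of them to possess a direct point-hyperplane representation. While ${\cal A}_k$ has one by hypothesis, it is not a priori clear that ${\cal B}$ does. The remedy is to strengthen the statement carried through the induction: I would prove simultaneously that the block pattern not only has minimum rank $\max\{r_1,\dots,r_{k-1}\}$ but also admits a direct point-hyperplane representation.

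To see that this strengthening survives the inductive step, I would look inside the proof of Theorem \ref{thm:A_1A_2}. There the combined arrangement $C_1\cup C_2$ in $\mathbb R^{d}$, with $d=\max\{r_1,r_2\}-1$, is a genuine point-hyperplane configuration: its points furnish the rows $[1,\ast,\dots,\ast]$ of a matrix $U_0$ with $d+1$ columns and its non-vertical hyperplanes furnish the columns $[\ast,\dots,\ast,1]^T$ of $V_0$, giving $A=U_0V_0$ with $\text{sgn}(A)=\bmatrix {\cal A}_1 & + \\ - & {\cal A}_2 \endbmatrix$ and $\rank(A)=\max\{r_1,r_2\}$. Since $U_0$ has all-ones first column and $V_0$ has all-ones last row, this is exactly a factorization of the form in Lemma \ref{lem:B=UV} with $D_1$ and $D_2$ equal to identity sign patterns; that is, the combined pattern itself has a direct point-hyperplane representation. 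Phrased differently, possessing a direct point-hyperplane representation is the same as arising from a point-hyperplane configuration with no vertical hyperplane, and Theorem \ref{thm:A_1A_2} manufactures such a configuration for its output. Consequently the output of each application is again eligible as an input to the next, the strengthened induction closes, and the corollary follows. The remaining bookkeeping --- verifying that the off-diagonal blocks in the regrouping are uniformly $+$ and $-$, and that $\max\{r_1,\dots,r_{k-1}\}\ge 2$ so that the hypotheses of Theorem \ref{thm:A_1A_2} are met --- is immediate.
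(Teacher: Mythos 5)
Your proposal is correct and follows essentially the same route as the paper, which disposes of the corollary with the single line ``repeated application of the preceding theorem.'' The one substantive addition you make---observing that the inductive step needs the intermediate block pattern ${\cal B}$ to itself admit a direct point-hyperplane representation, and extracting this from the fact that the combined configuration $C_1\cup C_2$ in the proof of Theorem~\ref{thm:A_1A_2} already furnishes a minimum-rank factorization of the form in Lemma~\ref{lem:B=UV} with identity signature patterns---is precisely the detail the paper leaves implicit, and it is genuinely needed for the induction to close.
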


%%%%%%%%%%%%% Section 3

\section{Sign patterns with few zeros on each column}

Analyzing a minimum rank factorization given in Lemma \ref{lem:B=UV} for a condensed   $m \times n $ sign pattern with minimum rank 3, we can establish the following result. 

\begin{theorem}\label{thm:2z}

Let $\cal A$ be a condensed $m \times n$ sign pattern with $mr ({\cal A}) = 3$. If the number of zero entries on each column of $\cal A$ is at most 2, then $mr_{\mathbb Q} ({\cal A}) = 3$.

\end{theorem}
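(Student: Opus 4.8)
The plan is to pass to the point--line picture established just before the theorem and then rationalize that configuration by a small perturbation of the points, using crucially that each line carries at most two incidences. First I would invoke Lemma \ref{lem:B=UV}: after replacing ${\cal A}$ by a signature-equivalent pattern (which alters neither $\mr$ nor $\mr_{\mathbb Q}$), there is a rank-$3$ matrix $B = UV \in Q({\cal A})$ in the normalized form, giving points $p_i = (u_{i2}, u_{i3})$ and oriented lines $l_j : v_{1j} + v_{2j}x + y = 0$, where the sign of $b_{ij}$ records whether $p_i$ is above, on, or below $l_j$. Under this dictionary the hypothesis ``each column has at most two zeros'' says precisely that each line $l_j$ passes through at most two of the $p_i$. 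Note also that every $l_j$ is non-vertical, since the $y$-coefficient is $1$, which matches the standing convention.

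Next I would perturb. Choose rational points $\tilde p_i$ with $\|\tilde p_i - p_i\|$ as small as desired. Because ``strictly above'' and ``strictly below'' are open conditions, all strict sign relations survive a sufficiently small perturbation of the points, provided the lines are allowed to move with them; so the entire content lies in moving the lines to rational position while keeping the incidences (the zeros) exact. Here I would split according to the number $s_j \le 2$ of points on $l_j$. When $s_j = 2$, I define $\tilde l_j$ to be the line through the two rational points it must contain; since the two original points have distinct $x$-coordinates, for a small perturbation this line is again non-vertical and rational, and by continuity every remaining point stays strictly on its original side. When $s_j = 1$, I keep $\tilde l_j$ through the single rational point it must contain and choose a rational slope: the set of slopes for which all other points lie on their prescribed sides is open and nonempty (it contains a small perturbation of the original slope), hence contains a rational value. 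When $s_j = 0$, I simply pick any rational line in the open, nonempty set of lines keeping every point on its prescribed side. In all cases $\tilde l_j$ is a rational, non-vertical, correctly oriented line, no incidence is created or destroyed, and no strict relation is reversed.

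Reading the rational points and rational lines back through the dictionary yields rational matrices $\tilde U$ ($m\times 3$) and $\tilde V$ ($3\times n$) with $\tilde B = \tilde U \tilde V \in Q({\cal A})$ and $\rank(\tilde B) \le 3$. Since $\tilde B \in Q({\cal A})$ and $\mr({\cal A}) = 3$, we also have $\rank(\tilde B) \ge 3$, so $\rank(\tilde B) = 3$ with $\tilde B$ rational; hence $\mr_{\mathbb Q}({\cal A}) \le 3$, and equality follows from $\mr_{\mathbb Q}({\cal A}) \ge \mr({\cal A}) = 3$.

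The step I expect to be delicate is the consistency of the simultaneous perturbation: a single point may lie on many lines at once, and one must check that relocating it to a rational position does not over-constrain those lines. This is exactly where the bound $s_j \le 2$ is essential. A line with $s_j = 2$ is pinned down by its two points, so it is simply redefined as their join, and no contradiction can arise, because condensedness forbids two distinct columns from being the same oriented or reverse-oriented line, so no two of these lines are forced through the same pair of points. Lines with $s_j \le 1$ retain a free slope (and a free intercept when $s_j = 0$) and impose no collinearity constraint at all. Thus no chain of incidences ever forces three or more points to be collinear, which is the obstruction that defeats rationalization in general; with at most two points per line the configuration rationalizes freely.
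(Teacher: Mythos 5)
Your proposal is correct and is essentially the paper's own argument in geometric language: the paper treats the entries of $U$ and the unconstrained entries of $V$ as free variables, solves the one or two incidence equations per column for $v_{1j}$ (and $v_{2j}$) as rational functions of those variables (your ``line through one point with free slope'' and ``join of two points''), and then perturbs the free variables to nearby rational values, invoking continuity to preserve all strict signs. The one point you assert without proof --- that the two points on a line with two incidences have distinct $x$-coordinates --- is exactly the step the paper derives from condensedness (equal $x$-coordinates would force equal rows of $U$, hence equal rows of $A$), and your argument is complete once that is noted.
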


\begin{proof}
Without loss of generality, we assume that $\cal A$ has a direct point--line representation. By Lemma \ref{lem:B=UV},  we then have a special minimum rank factorization $A=UV$ of a certain matrix $A=[a_{ij}]\in Q({\cal A})$, where 
$$ U= \begin{bmatrix} 1 & u_{12} & u_{13}  \\
1 & u_{22}  & u_{23}  \\
 \vdots & \vdots  & \vdots \\
 1 & u_{m2}  & u_{m3}  \end{bmatrix}
\quad \mbox{and } \ V= \begin{bmatrix}
 v_{11} & v_{12} & \cdots & v_{1n} \\
 v_{21} & v_{22} & \cdots & v_{2n} \\
1 &  1  & \cdots & 1 			\end{bmatrix}.  $$	

This factorization yields     
 an $m$ point--$n$ line configuration $C$ in the plane corresponding to $\cal A$. We now treat the entries $u_{ij}$ and $v_{ij}$ not in  the first column of $U$ or the last row of $V$ as independent variables allowed to take real values around the initial values in $U$ or $V$. Thus $A=UV$ becomes a matrix whose entries are polynomial functions of the variables  $u_{ij}$ and $v_{ij}$. 
 
 Let $P = \{p_1, p_2, \dots, p_m\}$ and $L = \{l_1, l_2, \dots, l_m\}$ be the points and lines in $C$. As the number of zero entries in each column of $\cal A$ is at most 2, each line $l_j$ contains at most two points in $P$. 
Let $S_j =\{ i : 1\leq i \leq m \text{ and }  p_i \text{ is on } l_j \}$. Then $|S_j| \leq 2$.

For a fixed $j$ ($1\leq j \leq n)$, suppose that $S_j \neq \emptyset$. We solve for certain entries of $V$ in terms other entries of $V$ in the same column and entries of $U$ (not in the first column).   We separate  the following two cases.

Case 1. $|S_j| =1$. \\
Write $S_j=\{ i \} $.  Then $a_{ij}=0$ means that  $v_{1j} + u_{i2}v_{2j} + u_{i3} = 0$. Solving the last equation for $v_{1j}$, we get 
      $$  v_{1j} =  - u_{i2}v_{2j} - u_{i3} .  \eqno{(3.1)} $$

Thus we can regard $v_{1j}$  as a rational function (in fact, a polynomial function in this case) of the entries involved on the right side of the above equation.

Case 2.  $|S_j| =2$.   \\
Write $S_j =\{ k, l \}.$ Then $a_{kj}=0$ and $a_{lj}=0$ mean that 
       $$\left\{
       \begin{array}{ll}
           v_{1j} + u_{k2}v_{2j} + u_{k3} = 0 \\
           v_{1j} + u_{l2}v_{2j} + u_{l3} = 0
        \end{array}.
        \right.
        $$

Note that if $u_{k2} =u_{l2}$, the above equations would imply $u_{k3} =u_{l3}$, so that the $k$-th and $l$-th rows of $U$ are the same, which would imply that $A$ has two equal rows, contradicting the fact that $\cal A$ is a condensed sign pattern. Thus $u_{k2} \ne u_{l2}$. Therefore, we can solve the above system of two equations for $v_{1j}$ and $v_{2j}$ to  obtain

       $$\left\{
       \begin{array}{ll}
           v_{1j} = -u_{k3} + u_{k2}\frac{u_{l3} - u_{k3} }{u_{l2} - u_{k2}} \\
           v_{2j} = \frac{u_{l3} - u_{k3} }{u_{k2} - u_{l2}}
        \end{array}
         \right.  
        \eqno{(3.2)}
        $$

        Thus we can regard $v_{1j}$ and $v_{2j}$ as rational functions of the entries involved on the right side of equations in (3.2).

The continuity of the functions given in (3.1) and (3.2) ensure that we can let the independent variables take suitable rational values sufficiently close to their initial values in $U$ or $V$ to yield rational perturbations $\tilde{U}$ and $\tilde{V}$ of $U$ and $V$ respectively, so that the resulting rational matrix $\tilde{U} \tilde{V}$ is in $Q(\cal A)$ and has rank at most 3. Indeed,  the dependence relations given by (3.1) and (3.2) ensure that $(\tilde{U} \tilde{V})_{ij}=0$ whenever $a_{ij}=(UV)_{ij}=0$. Thus $A_1 = \tilde{U} \tilde{V}$ is a rational realization of the minimum rank of $\cal A$.
\end{proof}

The following result is a significant generalization of the preceding theorem.

\begin{theorem}\label{thm:(r-1)zero}

Let $\cal A$ be any sign pattern and let $r=\text{mr}({\cal A})$. Suppose that the number of zero entries in each column of ${\cal A}_{c}$ is at most $r-1$. Then $mr_{\mathbb Q} ({\cal A}) = mr({\cal A})$ .

\end{theorem}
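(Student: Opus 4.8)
The plan is to generalize the argument of Theorem \ref{thm:2z} from the case $r=3$ to arbitrary $r$, replacing the point--line configuration in the plane by a point--hyperplane configuration in $\mathbb{R}^{r-1}$, and replacing the small linear systems (3.1) and (3.2) by systems with at most $r-1$ equations. First I would reduce to the condensed case: since deleting zero, duplicate, and opposite rows/columns preserves both $\mr$ and $\mr_{\mathbb{Q}}$, it suffices to prove $\mr_{\mathbb{Q}}({\cal A}_c)=\mr({\cal A}_c)$ for the condensed sign pattern ${\cal A}_c$, which also has $\mr=r$ and at most $r-1$ zeros per column. After a diagonal equivalence (which changes neither rank), I may assume ${\cal A}_c$ has a direct point--hyperplane representation, so by Lemma \ref{lem:B=UV} there is a matrix $A=[a_{ij}]\in Q({\cal A}_c)$ of rank $r$ with $A=UV$, where $U$ is $m\times r$ with first column all $1$'s and $V$ is $r\times n$ with last row all $1$'s.

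Next I would set up the perturbation scheme exactly as in Theorem \ref{thm:2z}. Treat as independent real variables all entries of $U$ except the first column and all entries of $V$ except the last row; then every entry of $A=UV$ is a polynomial in these variables, and the initial values reproduce the given realization. For each column $j$, let $S_j=\{\,i : a_{ij}=0\,\}$ be the set of rows where column $j$ vanishes; by hypothesis $|S_j|\le r-1$. The condition $a_{ij}=0$ for $i\in S_j$ says that the point $p_i=(u_{i2},\dots,u_{ir})$ lies on the hyperplane determined by the $j$th column of $V$, i.e.\ $v_{1j}+u_{i2}v_{2j}+\cdots+u_{i,r-1}v_{r-1,j}+u_{ir}=0$ (recall $v_{rj}=1$). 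Thus for a fixed $j$ I obtain a linear system in the $r-1$ unknowns $v_{1j},\dots,v_{r-1,j}$ consisting of $|S_j|\le r-1$ equations, whose coefficients are the variable entries of $U$.

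The key step is to solve this linear system for $|S_j|$ of the unknowns $v_{\cdot j}$ as rational functions of the remaining $v_{\cdot j}$ and the $U$-variables, with the solution defined and continuous near the initial values. The $|S_j|\times(r-1)$ coefficient matrix has rows $(1,u_{i2},\dots,u_{i,r-1})$ for $i\in S_j$; since $|S_j|\le r-1$ these rows can be completed to a square $(r-1)\times(r-1)$ block, and I must argue that some $|S_j|\times|S_j|$ minor is nonzero at the initial values. This is the analogue of the observation $u_{k2}\ne u_{l2}$ in Case 2 of Theorem \ref{thm:2z}. The crucial fact is that the points $\{p_i : i\in S_j\}$ lie on a common hyperplane and, because ${\cal A}_c$ is condensed, no two rows of $U$ are scalar multiples of one another; more is true geometrically, namely these $|S_j|\le r-1$ points must be affinely independent. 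Indeed, if they were affinely dependent they would lie in an affine subspace of dimension less than $|S_j|-1$, which would force a linear dependence among the corresponding rows of $U$ and ultimately an equality or opposition of rows or columns contradicting condensedness; this affine independence guarantees the required minor is nonzero, so by Cramer's rule the chosen $v_{\cdot j}$ are rational functions of the remaining variables, continuous in a neighborhood of the initial values.

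Finally, invoking continuity as in Theorem \ref{thm:2z}, I would choose rational values for all the independent variables sufficiently close to their initial values, and set the solved-for entries of each column of $V$ by the corresponding rational (Cramer) formulas; this produces rational matrices $\tilde U$ and $\tilde V$ with $\tilde A=\tilde U\tilde V$ rational of rank at most $r$. By construction $\tilde A_{ij}=0$ exactly when $i\in S_j$, i.e.\ precisely when $a_{ij}=0$, while the nonzero entries keep their signs because they are nonzero at the initial values and the perturbation is small; hence $\tilde A\in Q({\cal A}_c)$ and $\mr_{\mathbb{Q}}({\cal A}_c)\le r$. Combined with the trivial inequality $\mr_{\mathbb{Q}}\ge \mr$, this gives $\mr_{\mathbb{Q}}({\cal A}_c)=r$, and therefore $\mr_{\mathbb{Q}}({\cal A})=\mr({\cal A})$. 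The main obstacle is the affine-independence argument that the relevant minor is nonzero: one must verify carefully that condensedness of ${\cal A}_c$ rules out the degenerate configurations in which the $|S_j|$ incident points fail to be affinely independent, since that is exactly what allows the local solution of the incidence equations.
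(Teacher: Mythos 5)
Your overall scheme (reduce to the condensed case, use the factorization $A=UV$ from Lemma \ref{lem:B=UV}, treat the entries of $U$ off the first column and of $V$ off the last row as variables, solve the incidence system of $|S_j|\le r-1$ equations for some entries of the $j$th column of $V$, then perturb everything to rational values) is the same as the paper's. But your key step has a genuine gap: you claim that condensedness forces the points $\{p_i : i\in S_j\}$ lying on a common hyperplane to be affinely independent, so that the required $|S_j|\times|S_j|$ minor of $U$ is nonzero \emph{at the initial values}. That is true when $r=3$ (two points of $\mathbb{R}^2$ on a common line are affinely independent unless they coincide, and coincidence would give two equal rows of $A$), which is why Case 2 of Theorem \ref{thm:2z} works; it is false for $r\ge 4$. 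Condensedness only prevents two rows of $U$ from being proportional; it does not prevent, say, three collinear points in $\mathbb{R}^3$ from all lying on one plane $l_j$ while being distinguished by the other hyperplanes of the configuration. An affine dependence among $|S_j|\ge 3$ incident points is a linear dependence among three or more rows of $U$ with no two rows equal or opposite, so no contradiction with condensedness arises, and the coefficient matrix of your system can be singular at the initial point, leaving your Cramer formulas undefined there.

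The paper's proof supplies precisely the idea you are missing: it does not solve the system at the initial configuration. It forms the product $D$ of the determinants of all $t\times t$ submatrices of $U$ built from the first $t$ columns ($1\le t\le r-1$), observes that $D$ is a nonzero polynomial in the free entries of columns $2$ through $r-1$ of $U$, and concludes that the set $T$ of rational points where $D\ne 0$ is dense. It then first moves the $U$-variables into $T$ (staying inside the $\epsilon$-box in which every nonzero entry of $UV$ keeps its sign) and only afterwards solves system (3.3), whose coefficient matrix is now guaranteed invertible. In short, the invertibility is obtained by a general-position perturbation, not deduced from condensedness. Without this step (or some substitute handling of columns whose incident points are affinely dependent --- and even then one must check that the re-solved entries of $V$ remain close to values realizing the correct signs), your argument does not go through for $r\ge 4$.
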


\begin{proof}
Since $\cal A$ and its condensed sign pattern ${\cal A}_{c}$ have the same minimum rank and the same rational minimum rank, without loss of generality, we may assume that ${\cal A}= {\cal A}_c$. 

Assume that ${\cal A}=[\bold a_{ij}]= {\cal A}_{c}$ is $m \times n$. Further, replacing $\cal A$ with a suitable sign pattern diagonally equivalent to $\cal A$ if necessary, we may assume that $A$ has a direct point-hyperplane representation. 
By Lemma \ref{lem:B=UV},  there is a real matrix $A \in Q({\cal A}_{c})$ with $\mbox{rank}(A)=r$ such that $A=UV$, where $U$ is $m\times r$, V is $r\times n$, 

$$ U= \begin{bmatrix} 1 & u_{12} & \cdots & u_{1r}  \\
1 & u_{22} & \cdots & u_{2r}  \\
 \vdots & \vdots & \vdots & \vdots \\
 1 & u_{m2} & \cdots & u_{mr}  \end{bmatrix},
\quad \mbox{and } \ V=\begin{bmatrix}
 v_{11} & v_{12} & \cdots & v_{1n} \\
 v_{21} & v_{22} & \cdots & v_{2n} \\
 \vdots & \vdots & \vdots & \vdots \\
  v_{r-1,1} & v_{r-1,2} & \cdots & v_{r-1,n} \\
1 &  1  & \cdots & 1 			\end{bmatrix}.  $$

We regard $u_{ij}$ ($1 \leq i \leq m$, $2 \leq j \leq r$) and $v_{kl}$ ($1 \leq k \leq r-1$, $1 \leq l \leq n$) as independent variables that are allowed to take on real number values (close to their original values in $U$ and $V$). Then each nonzero entry of $A=UV$ can be regarded as a polynomial (and hence, continuous) function of the variables $u_{ij}$ and $v_{ij}$. Thus there is a real number $\epsilon >0 $ such that for all values $u_{ij}'$ and $v_{kl}'$ such that $|u_{ij}'-u_{ij}|< \epsilon$ and $|v_{kl}'-v_{kl}|< \epsilon$ for all $i, j, k$ , and $l$ (which gives an open hypercube in $\mathbb R^{(m+n)(r-1)}$, denoted $HC_\epsilon$), then  sgn$(U'V')_{ij}$=sgn$(UV)_{ij}$ whenever $\bold{a}_{ij}\ne 0$. 

%The above factorization  gives rise to a simple $m$ point--$n$ hyperplane configuration $C$ in ${\mathbb R}^{r-1}$. Denote the points in $C$ by $p_1, \dots, p_m$ and the hyperplanes in $C$ by $H_1, \dots, H_n$.  The  hypotheses on $\cal A$ ensure that on each hyperplane in $C$ there are at most $r-1$ points in $C$.

Note that the determinant of each $t\times t$ ($1 \le t \le r-1$) square submatrix of $U$ involving the first $t$  columns of $U$  is a nonzero polynomial. Let $D$ be the product of all such determinants. Then $D$ is  a nonzero polynomial in the $ m (r-2)$ variables $u_{ij}$ (with $2\le j \le r-1$).  Let $c= m(r-2)$ and let $d$ be the degree of $D$.   It can be shown by induction on the number of variables (indeterminates) in $D$ that if $D$ is equal to zero at all points  in $Q_1 \times  \dots\times  Q_c$, for some subsets 
$Q_j \subset \mathbb Q$ with cardinality $d+1$, then $D=0$, a contradiction. Thus for any subsets $Q_j\subset \mathbb Q$ with $| Q_j| \geq d+1$, there is at least one point in $Q_1 \times  \dots\times  Q_c$ at which $D\ne 0$. 
It follows  that the set $T$ of rational points in $\mathbb Q^c$ at which $D\neq 0$ is dense in $\mathbb R^c$.  

For a fixed $j$,  assume there are $s_j\ge 1$ zero entries in the $j$-th column of $\cal A$, with row indices $i_1, \dots , i_{s_j}$.  
The fact that $a_{i_t j}=0$ for $1\le t \le  s_j$ means that we have the following system of equations. 
$$\left\{
       \begin{array}{ll}
           v_{1j} + u_{i_12}v_{2j} + \cdots + u_{i_1, r-1}v_{r-1, j} + u_{i_1r} = 0 \\
           v_{1j} + u_{i_22}v_{2j} + \cdots + u_{i_2, r-1}v_{r-1, j} + u_{i_2r} = 0 \\
           \ \ \ \ \ \ \ \vdots \\
           v_{1j} + u_{i_{s_j}2}v_{2j} + \cdots + u_{i_{s_j}, r-1}v_{r-1, j} + u_{i_{s_j}r} = 0
        \end{array}
        \right.  
\eqno{(3.3)}
$$

When the entries of the columns 2 through $r-1$ of $U$ are restricted to the set $T \subset \mathbb Q^c$, we see that for fixed $j$, the above system of equations (3.3) has an invertible coefficient matrix, with $v_{1j}, \dots, v_{s_j j}$  regarded as the unknowns. Thus we can solve for the variables $v_{1j}, \dots, v_{s_j j}$, which are then given as rational functions depending on some variables in $U$ and the remaining entries in the $j$-th column of $V$. After this is done for all $j$ for which there are some zero entries in the $j$-th column of $\cal A$, we have then expressed certain entries of $V$ as rational functions (with all coefficients in $\mathbb Q$) of some  entries of $U$ and the remaining entries in $V$ which are independent variables. Since $T$ is dense in $\mathbb R^c$, we can find a rational point $x_0 \in HC_\epsilon$ such that the projection of $x_0$ on $\mathbb R^c$ is in $T$. We then treat the components of 
$x_0$ as the initial values of all the variables in $U$ and $V$. 

We now let the vector of free variables in the columns 2 through $r-1$ of  $U$ take values in  $T$ and approach the projection of $x_0$ on $\mathbb R^c$,   let the free variables in $V$ take rational values close to their initial values in $x_0$. The entries of the last column of $U$ are also free variables allowed to take rational values close to their initial values in $x_0$. Obviously, when all the independent variables are assigned rational values, the dependent variables also have rational values. 

As rational functions are continuous whenever defined, we see that all the rational functions expressing the dependent variables are continuous at the rational point $x_0$, and the the entries of $UV$ are then continuous functions of the variables in $U$ (which are all free) and  and the independent variables in $V$. The fact that dependent variables in $V$ are solved from (3.3) ensures that $(UV)_{ij}=0 $ whenever $\bold a_{ij}=0$. Continuity of the rational functions guarantees that  sgn$(\tilde U \tilde V)_{ij}=$ $(\bold a_{ij})$ 
whenever all the free variables are sufficiently close to their initial values in $x_0$. Thus we can choose suitable rational values for the variables in the columns 2 through $r-1$ of  $U$ so that the resulting  vector in in $T$ and each variable is sufficiently close to its initial value in $x_0$, choose the free variables in the last column of $U$ or in $V$ to be some rational number sufficiently close to their initial values, and calculate the values of the dependent variables by using the rational functions expressing them in terms of the independent variable so that the resulting rational matrices   $\tilde U$ and $\tilde V$ satisfy that sgn$(\tilde U \tilde V)_{ij}=$ $(a_{ij})$ for all $i$ and $j$. Thus we arrive at a rational matrix $B =\tilde U \tilde V\in Q({\cal A})$ such that rank$(B) \leq r$. It follows that $mr_{\mathbb Q} ({\cal A}) = mr({\cal A})=r.$
\end{proof}

%
%Therefore, for each $u_{ij}$ $(1 \leq i \leq s, 2 \leq j \leq r)$ and for each positive number $\epsilon$,  there exist a suitable rational number $u^{'}_{ij}$  such that (i) $|u_{ij} - u^{'}_{ij} | \leq \varepsilon $,  (ii) $\mbox{sgn}(u^{'}_{ij}) = \mbox{sgn}(u_{ij})$ and (iii) after replacing every $u_{ij}$ by $u^{'}_{ij}$, every $k \times k$ ($k \leq s$) minor of the new matrix $\tilde{U}$ is nonzero.

%A new system of equations can be constructed as follows.
%
%$$\left\{
%       \begin{array}{ll}
%           v_{1j} + u^{'}_{12}v_{2j} + \cdots + u^{'}_{1, r-1}v_{r-1, j} + u^{'}_{1r} = 0 \\
%           v_{1j} + u^{'}_{22}v_{2j} + \cdots + u^{'}_{2, r-1}v_{r-1, j} + u^{'}_{2r} = 0 \\
%           \ \ \ \ \ \ \ \vdots \\
%           v_{1j} + u^{'}_{s2}v_{2j} + \cdots + u^{'}_{s, r-1}v_{r-1, j} + u^{'}_{sr} = 0
%        \end{array}.
%        \right.
%        $$
%
%
%Now we can solve equations above for $v_{1j}, \dots, v_sj$ and regard $v_{1j}, \dots, v_sj$ as rational functions of the other variables involved in the above system of equations.
%
%The continuity of the functions ensure that we can let $v_{s+1,j}, \dots, v_{r-1,j}$ take suitable rational values sufficiently close to their initial values in $V$ to yield rational perturbation $\tilde{V}$ so that the resulting rational matrix $\tilde{U} \tilde{V}$ is in $Q(\cal A)$ and has rank at most 3. Note that $(\tilde{U} \tilde{V})_{ij}=0$ whenever $(UV)_{ij}=0$. 

By applying the preceding theorem to the transpose of the condensed sign pattern of a sign pattern, we immediately get the following result.

\begin{corollary}\label{col:row-r-1}

Let $\cal A$ be any sign pattern and let $r= \text{mr}({\cal A})$. If the number of zero entries on each row of ${\cal A}_c$ is at most $r-1$, then $mr_{\mathbb Q} ({\cal A}) = mr({\cal A})$ .

\end{corollary}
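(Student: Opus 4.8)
The plan is to reduce the row hypothesis to the column hypothesis already handled by Theorem \ref{thm:(r-1)zero} via a single transpose. Set $\mathcal{B} = \mathcal{A}^T$. The whole proof rests on the observation that transposition interacts well with all three quantities appearing in the statement: minimum rank, rational minimum rank, and the condensing operation. Once these interactions are recorded, the corollary follows by applying Theorem \ref{thm:(r-1)zero} to $\mathcal{B}$ instead of $\mathcal{A}$.

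First I would record the rank invariance. Since $Q(\mathcal{B}) = \{\, C^T : C \in Q(\mathcal{A}) \,\}$ and $\rank(C^T) = \rank(C)$ for every real (respectively rational) matrix $C$, we have $\mr(\mathcal{B}) = \mr(\mathcal{A}) = r$ and $\mr_{\mathbb Q}(\mathcal{B}) = \mr_{\mathbb Q}(\mathcal{A})$. Next I would observe that the condensing process commutes with transposition: deleting zero, duplicate, and opposite rows of $\mathcal{A}^T$ is exactly the same as deleting the corresponding zero, duplicate, and opposite columns of $\mathcal{A}$, and likewise with the roles of rows and columns interchanged. Hence $\mathcal{B}_c = (\mathcal{A}^T)_c = (\mathcal{A}_c)^T$. (The deletion convention in the definition of the condensed pattern picks a particular representative among identical or opposite lines, but since removing any one such line preserves both minimum rank and rational minimum rank, the choice is immaterial here.) Finally, the number of zero entries in the $i$th row of $\mathcal{A}_c$ equals the number of zero entries in the $i$th column of $(\mathcal{A}_c)^T = \mathcal{B}_c$.

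Putting these together, the hypothesis that every row of $\mathcal{A}_c$ has at most $r-1$ zero entries translates into the statement that every column of $\mathcal{B}_c$ has at most $r-1$ zero entries, while $\mr(\mathcal{B}) = r$. Theorem \ref{thm:(r-1)zero} then applies verbatim to $\mathcal{B}$ and yields $\mr_{\mathbb Q}(\mathcal{B}) = \mr(\mathcal{B}) = r$. Undoing the transpose via the rank invariance above gives $\mr_{\mathbb Q}(\mathcal{A}) = \mr_{\mathbb Q}(\mathcal{B}) = r = \mr(\mathcal{A})$, as required.

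There is no substantive obstacle in this argument; it is a formal corollary. The only point demanding a sentence of care is the commutation $(\mathcal{A}^T)_c = (\mathcal{A}_c)^T$ together with the remark that the deletion convention does not affect the conclusion, since this is precisely what lets the row hypothesis on $\mathcal{A}_c$ become a column hypothesis on the condensed pattern of $\mathcal{A}^T$ to which the theorem is directly applicable.
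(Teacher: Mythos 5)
Your argument is correct and is exactly the route the paper takes: the paper proves this corollary in one line by applying Theorem \ref{thm:(r-1)zero} to the transpose of the condensed sign pattern. Your write-up simply makes explicit the (straightforward) facts that transposition preserves minimum and rational minimum rank and commutes with condensing.
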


Furthermore, by Lemma 2.4 in \cite{Arav05}, for every integer $k$ ($\text{mr}({\cal A}) \leq k \leq \mbox{MR}({\cal A})$) there exists a matrix $B \in Q({\cal A})$ with rank $k$.  By considering a full rank factorization of such a matrix $B$, the proof of theorem 3.2 can be adapted to establish the following result.

\begin{theorem}\label{thm:k0}

Let $\cal A$ be any sign pattern and let $r=mr({\cal A})$. If the number of zero entries on each column of $\cal A$ is at most $k$  for some integer $k$ with $r\leq k \leq \text{MR}({\cal A})-2$, then $mr_{\mathbb Q} ({\cal A}) \leq  k+1$ .

\end{theorem}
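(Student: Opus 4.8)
Let $\cal A$ be any sign pattern and let $r=\mr({\cal A})$. If the number of zero entries on each column of $\cal A$ is at most $k$ for some integer $k$ with $r\leq k \leq \mbox{MR}({\cal A})-2$, then $\mr_{\mathbb Q}({\cal A}) \leq k+1$.

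=== PROOF PROPOSAL ===

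The plan is to adapt the perturbation argument of Theorem \ref{thm:(r-1)zero} almost verbatim, the one genuinely new ingredient being that we start from a factorization of an \emph{intermediate}-rank matrix rather than a minimum-rank one. First I would reduce to the condensed case: passing from $\cal A$ to ${\cal A}_c$ deletes zero, duplicate, and opposite rows and columns, which preserves $\mr$, $\mr_{\mathbb Q}$, and $\mbox{MR}$, and can only decrease the number of zeros in any column; hence we may assume ${\cal A}={\cal A}_c$ and that it still has at most $k$ zeros per column. Since $k\ge r$ and $k\le \mbox{MR}({\cal A})-2$, we have $r\le k+1\le \mbox{MR}({\cal A})$, so Lemma 2.4 of \cite{Arav05} supplies a matrix $B\in Q({\cal A})$ with $\rank(B)=k+1$. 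Taking a full-rank factorization $B=UV$ with $U$ of size $m\times(k+1)$ and $V$ of size $(k+1)\times n$, and applying the rotation normalization of Lemma \ref{lem:B=UV} (which uses only that $B$ has no zero row or column, not that the factorization has minimum rank), I may assume the first column of $U$ consists of $1$'s and the last row of $V$ consists of $1$'s.

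Next, exactly as in Theorem \ref{thm:(r-1)zero}, I would treat the entries of $U$ in columns $2$ through $k+1$ and the entries of $V$ in rows $1$ through $k$ as independent variables ranging near their values in the factorization, so that every entry of $UV$ becomes a polynomial in these variables and, by continuity, the signs of the nonzero entries of $UV$ are preserved on a small hypercube about the initial point. For each column $j$ in which $\cal A$ has $s_j$ zero entries (with $s_j\le k$), the vanishing of the corresponding entries of $UV$ yields a linear system in the unknowns $v_{1j},\dots,v_{s_j j}$, analogous to (3.3), whose coefficient matrix is the $s_j\times s_j$ submatrix of $U$ formed by the rows indexing those zeros together with the first $s_j$ columns. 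Because $s_j\le k<k+1$, there are enough columns of $U$ available, and I solve this system for $v_{1j},\dots,v_{s_j j}$, expressing them as rational functions of the remaining entries of $U$ and of the $j$-th column of $V$.

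The crux is to guarantee that all these coefficient submatrices can be made simultaneously invertible at a rational point near the initial one. As in Theorem \ref{thm:(r-1)zero}, I form the product $D$ of the determinants of all $t\times t$ submatrices of $U$ ($1\le t\le k$) that use the first $t$ columns; each such determinant is a nonzero polynomial in the variable entries of $U$ (specializing the free entries to distinct-node Vandermonde values makes it nonzero, the all-$1$'s first column being precisely the degree-$0$ Vandermonde column), so $D$ is a nonzero polynomial and the rational set $T$ on which $D\ne0$ is dense in the relevant real space. Choosing rational values in $T$ sufficiently close to the initial ones, letting the remaining free entries of $U$ and $V$ take nearby rational values, and computing the dependent entries of $V$ from the solved rational formulas, I obtain rational matrices $\tilde U,\tilde V$ with $\tilde U\tilde V\in Q({\cal A})$: the solved equations force $(\tilde U\tilde V)_{ij}=0$ wherever the $(i,j)$-entry of $\cal A$ is $0$, while continuity preserves the signs of the nonzero entries. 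Since $\tilde U$ has only $k+1$ columns, $\rank(\tilde U\tilde V)\le k+1$, whence $\mr_{\mathbb Q}({\cal A})\le k+1$.

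The only step requiring care beyond Theorem \ref{thm:(r-1)zero} is the first one: extracting the rank-$(k+1)$ matrix $B$ and checking that its full-rank factorization, which need not be a minimum-rank factorization, still admits the rotation normalization and the nonzero-minor genericity. Both of these depend only on $B$ having no zero rows or columns (ensured by condensedness) and on the off-first-column entries of $U$ being genuine indeterminates, so the adaptation goes through and the remainder is identical to Theorem \ref{thm:(r-1)zero}. I would also note in passing that the hypothesis $k\le\mbox{MR}({\cal A})-2$, rather than merely $k\le\mbox{MR}({\cal A})-1$, is what makes the conclusion nontrivial: a rational matrix of rank $\mbox{MR}({\cal A})$ always exists by density, so $\mr_{\mathbb Q}({\cal A})\le\mbox{MR}({\cal A})$ holds for free, and the bound $\mr_{\mathbb Q}({\cal A})\le k+1\le\mbox{MR}({\cal A})-1$ is a genuine improvement only under the stated hypothesis.
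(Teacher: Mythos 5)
Your proposal is correct and follows essentially the same route the paper intends: the paper itself only sketches this proof as ``take a rank-$(k+1)$ matrix in $Q({\cal A})$ via Lemma 2.4 of the rational realization paper, form a full-rank factorization, and adapt the argument of Theorem~\ref{thm:(r-1)zero},'' which is exactly what you carry out, including the correct observation that the normalization of Lemma~\ref{lem:B=UV} needs only nonzero rows and columns rather than minimality of the rank. The one small inaccuracy is your claim that condensation preserves $\mbox{MR}$ (it can decrease it, e.g.\ for a pattern with two identical rows), but this is harmless: if $\mbox{MR}({\cal A}_c)\le k$ the conclusion is immediate since every rational matrix in $Q({\cal A}_c)$ already has rank at most $k$, and otherwise a rank-$(k+1)$ realization of ${\cal A}_c$ exists and your argument applies verbatim.
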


%\section{Nonnegative sign patterns and convex polytopes }
\section{The smallest known sign pattern whose minimum rank is 3 but whose rational minimum rank is greater than 3}

Kopparty and Rao \cite{KP08} showed the existence of a $12 \times 12$ sign pattern with  minimum  rank  3 and  rational minimum rank  greater than 3, suing the following configuration. 

\hspace{7.3cm}
\setlength{\unitlength}{0.8mm}
\begin{picture}(100,70)

\put(-10,10){\makebox(0,0){$\bullet$}}
\put(10,10){\makebox(0,0){$\bullet$}}
\put(-42,10){\makebox(0,0){$\bullet$}}
\put(42,10){\makebox(0,0){$\bullet$}}
\put(-16,29){\makebox(0,0){$\bullet$}}
\put(16,29){\makebox(0,0){$\bullet$}}
%\put(0,40.8){\makebox(0,0){$\bullet$}}
\put(0,23.7){\makebox(0,0){$\bullet$}}
\put(-24.5,58.5){\makebox(0,0){$\bullet$}}
\put(24.5,58.5){\makebox(0,0){$\bullet$}}

\put(-50,10){\line(1,0){100}}
\drawline(-42,10)(-16,29)(0,40.8)(28,61)
\drawline(-10,10)(-16,29)(-26,63)
\drawline(-42,10)(0,23.7)(16,29)
\drawline(10,10)(0,23.7)(-24.5,58.5)

\drawline(42,10)(16,29)(0,40.8)(-28,61)
\drawline(10,10)(16,29)(26,63)
\drawline(42,10)(0,23.7)(-16,29)
\drawline(-10,10)(0,23.7)(24.5,58.5)

\put(-42,7){\makebox(0,0){$p_1$}}
\put(-10,7){\makebox(0,0){$p_5$}}
\put(10,7){\makebox(0,0){$p_6$}}
\put(42,7){\makebox(0,0){$p_2$}}
\put(-20,29){\makebox(0,0){$p_7$}}
\put(20,29){\makebox(0,0){$p_8$}}
\put(0,27.7){\makebox(0,0){$p_9$}}
\put(-28.5,58.5){\makebox(0,0){$p_3$}}
\put(28.5,58.5){\makebox(0,0){$p_4$}}

\put(-30,7){\makebox(0,0){$l_1$}}
\put(-25,15){\makebox(0,0){$l_2$}}
\put(-30,23){\makebox(0,0){$l_3$}}
\put(25,15){\makebox(0,0){$l_4$}}
\put(30,23){\makebox(0,0){$l_5$}}
\put(-24,45){\makebox(0,0){$l_6$}}
\put(-12,45){\makebox(0,0){$l_7$}}
\put(24,45){\makebox(0,0){$l_8$}}
\put(12,45){\makebox(0,0){$l_9$}}

\put(0,-1){\makebox(0,0){Figure $4.1$}}

\end{picture}

\vspace{7mm}

\begin{example}
Using our approach in Section 2, we obtain the following $9 \times 9$ sign pattern ${\cal A}_0$ corresponding to the preceding point-line configuration, with $mr({\cal A}_0) \leq 3$.. 
$$
{\cal A}_0 = \begin{bmatrix} 
0 & 0 & 0 & - & - & - & - & + & +  \\
0 & - & - & 0 & 0 & + & + & - & -  \\
+ & + & + & + & 0 & 0 & 0 & + & +  \\
+ & + & 0 & + & + & + & + & 0 & 0  \\
0 & - & - & - & - & 0 & - & + & 0  \\
0 & - & - & - & - & + & 0 & 0 & -  \\
+ & + & 0 & 0 & - & 0 & - & + & +  \\
+ & 0 & - & + & 0 & + & + & 0 & -  \\
+ & 0 & - & 0 & - & + & 0 & + & 0
\end{bmatrix}.
$$
Note that the submatrix  ${\cal A}_0[\{4, 5, 6\}, \{7, 8, 9\}]$ is sign nonsingular. So $mr({\cal A}_0)=3$.  Furthermore, since Figure 4.1 cannot be achieved by using only rational points and rational lines (lines passing through 2 rational points) \cite{Grunbaum09}, $mr_{\mathbb Q}({\cal A}_0) > 3$. But it is easy to observe that after deleting the point $p_9$ from Figure 4.1, the resulting 8 point--9 line configuration  can be achieved by using only rational points and rational lines. Thus after deleting the last row of ${\cal A}_0$, the rational minimum rank of the new sign pattern, ${\cal A}_1$, is 3. Since deleting 1 row can decrease the rank of a matrix by at most 1, $mr_{\mathbb Q}({\cal A}_0) = 4$.  As indicated in \cite{Grunbaum09}, the 9 point--9 line configuration in figure 4.1 is probably the smallest point-line configuration that does not have rational realization.  Thus the sign pattern ${\cal A}_0$ is probably the smallest sign pattern whose minimum ranks over the reals and the rationals are different. 
\end{example}

\section{Open problems}

For the sign pattern ${\cal A}_0$ in the last example, there are  4 zeros in  column 1. It was shown in Theorem \ref{thm:2z} that for a sign pattern with minimum rank 3, if the number of zeros in each column is at most 2, then the sign pattern has rational minimum rank 3 as well. This leaves the case of having up to three zeros in each column open. The lack of known not rationally realizable point-line configurations on the plane in which each line contains at most three points in the configuration suggests the following conjecture.

\begin{conjecture}\label{coj:three0}
Let $\cal A$ be any sign pattern with $mr({\cal A}) = 3$. If the number of zero entries on each column of $\cal A$ is at most $3$, then
$\mbox{mr}_{\mathbb Q} ({\cal A}) =\mbox{mr}({\cal A})$.
\end{conjecture}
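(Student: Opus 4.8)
The plan is to reduce the conjecture, exactly as in the proofs of Theorems \ref{thm:2z} and \ref{thm:(r-1)zero}, to a purely geometric statement about rational realizability of a point--line configuration, and then to isolate the one genuinely new phenomenon that allowing three zeros per column introduces. As before, we may assume ${\cal A}={\cal A}_c$ is condensed with a direct point--line representation, so by Lemma \ref{lem:B=UV} there is a rank-$3$ factorization $A=UV\in Q({\cal A})$ with the first column of $U$ and the last row of $V$ equal to $\mathbf 1$, giving an $m$ point--$n$ line configuration $C$ with points $p_i=(u_{i2},u_{i3})$ and lines $l_j: v_{1j}+v_{2j}x+y=0$. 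Each line carries at most three incident points. Columns of $V$ corresponding to lines with at most two incident points are handled verbatim as in Theorem \ref{thm:2z}: once the point coordinates are rational, $v_{1j}$ and $v_{2j}$ are recovered as rational functions of them. Hence the \emph{entire} difficulty is concentrated in the lines carrying exactly three points.

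First I would record what a triple point imposes. If $l_j$ passes through $p_{i_1},p_{i_2},p_{i_3}$, the three equations $v_{1j}+u_{i_t2}v_{2j}+u_{i_t3}=0$ in the two unknowns $v_{1j},v_{2j}$ are consistent if and only if the collinearity determinant
$$\det\begin{bmatrix} 1 & u_{i_12} & u_{i_13}\\ 1 & u_{i_22} & u_{i_23}\\ 1 & u_{i_32} & u_{i_33}\end{bmatrix}=0$$
vanishes, in which case (no line being vertical, the three $x$-coordinates are distinct) $v_{1j},v_{2j}$ are solved rationally from any two of the equations. Thus the conjecture is equivalent to the following: the real point set $\{p_i\}$ can be perturbed to a rational one, arbitrarily close to the original, preserving every prescribed triple collinearity \emph{exactly} while preserving every strict ``above/below'' relation. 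The strict relations are open conditions and survive any sufficiently small perturbation, so the problem is precisely to produce rational points of the real variety $W\subset\mathbb R^{2m}$ cut out by the triple-collinearity determinants, densely near the given real point.

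The mechanism I would try mirrors the elimination used for $V$ in Theorem \ref{thm:(r-1)zero}, but now applied to the point coordinates themselves. Each collinearity determinant is multilinear, hence \emph{linear} in every individual coordinate $u_{ab}$, so a single constraint can be solved rationally for any one coordinate occurring in it with nonzero coefficient. The plan is to find an injective assignment of each triple line to a distinct coordinate it contains whose induced substitution order is acyclic; the unassigned coordinates are then set to rational values close to their originals, and the assigned ones are computed by rational functions, yielding a rational point of $W$ with all collinearities holding exactly. Feeding these rational points back through the $V$-solving step produces rational $\tilde U,\tilde V$ with $\mbox{sgn}(\tilde U\tilde V)={\cal A}$ and $\mbox{rank}\le 3$, giving $\mbox{mr}_{\mathbb Q}({\cal A})=3$.

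The main obstacle, and the reason the statement is only a conjecture, is precisely the existence of such an acyclic elimination. The construction breaks down exactly when the triple-collinearity constraints close into a cycle that forces an algebraic relation among the remaining free coordinates; such a forced incidence is governed by an incidence theorem of the plane, and the real danger is a cycle that pins a coordinate (or a cross-ratio) to an irrational value. The heart of a proof would be to show that with at most three points per line every forced incidence is a consequence of incidence theorems that already hold over $\mathbb Q$ (in the spirit of Pappus and Desargues), so that $W$ is unirational over $\mathbb Q$ near its real locus and no irrationality can be forced. This is consistent with the known picture: every configuration realizable over $\mathbb R$ but not over $\mathbb Q$, such as the one in Figure~4.1 from \cite{Grunbaum09}, requires a line through at least four points. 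Making the ``no bad cycle'' statement precise -- equivalently, establishing a Hall-type matching of constraints to coordinates together with the rationality of the residual incidences -- is the step I expect to be genuinely hard.
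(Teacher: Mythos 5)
This statement is a \emph{conjecture} in the paper; the authors give no proof, and in Section 5 they explicitly note that it is open and is in fact \emph{stronger} than Gr\"unbaum's conjecture on the rational realizability of $3$-configurations, with only the partial results of Sturmfels and White (for $11_3$ and $12_3$ configurations) known. So there is no ``paper's own proof'' to compare against, and your proposal does not close the gap either --- as you yourself acknowledge in your final paragraph.

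Your reduction is sound and matches the paper's framework: by Lemma \ref{lem:B=UV} the conjecture is equivalent to showing that every simple point--line configuration in which each line carries at most three points of the configuration can be perturbed to a rational configuration preserving all incidences exactly and all strict sign relations. But the mechanism you propose --- an acyclic, Hall-type matching of each triple-collinearity constraint to a distinct coordinate, solved by back-substitution --- cannot work in general, and the failure is not a technicality. Already the Pappus configuration ($9$ points, $9$ lines, three points per line) has its ninth collinearity \emph{forced} by the other eight: there is no acyclic elimination order, and the constraint variety $W$ is genuinely cut out by dependent equations. Your fallback --- that every such forced incidence is a consequence of incidence theorems ``that already hold over $\mathbb Q$'' --- is precisely the open realizability problem restated, not a reduction of it; nobody knows whether the coordinate ring of a general $3$-configuration's realization space admits rational points near every real point. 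Your remark that every known real-but-not-rational configuration needs a line through four points is accurate as an empirical observation (and is the paper's motivation for the conjecture), but it is evidence, not proof. In short: the proposal is a reasonable research plan whose decisive step coincides with the open problem itself, so it does not constitute a proof of the statement.
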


It should be mentioned that this conjecture is stronger than the conjecture in \cite{Grunbaum09} on the rational realizability of point-line configurations $C$ (referred to as 3-configurations in \cite{Grunbaum09}) on the plane such that each point in $C$ is on exactly three lines in $C$ and each line in $C$ passes through exactly three points in $C$.    Sturmfels and White  \cite{Sturmfels90} showed that every 11 point-11 line or 12 point-12 line configuration in which every point is contained in exactly 3 lines and every line passes through exactly 3 points, can be achieved by using only rational points and rational lines (lines passing through 2 rational points).  The general case is still open. 

Generalizing Conjecture \ref{coj:three0}, we have the following natural question. 

\begin{problem} \label{prob:r-zeros}
Let $\cal A$ be any sign pattern with $mr({\cal A}) = r\geq 3 $. If the number of zero entries on each column of $\cal A$ is at most $r$, it is w
always true that 
$\mbox{mr}_{\mathbb Q} ({\cal A}) =\mbox{mr}({\cal A})$?
\end{problem}

It is shown in \cite{Grunbaum09} that all points and lines in Figure 4.1 can be represented by using numbers in $\mathbb Q(\sqrt {5})$. We say that the configuration in Figure 4.1 requires $\sqrt {5}$. By mixing point--line configurations that require different irrational numbers, we suspect that one can construct a new configuration such that the corresponding sign pattern has real minimum rank 3 and large rational minimum rank.

A natural question is the following.

\begin{problem}
Is it true that for each integer $k\geq 4$, there exists a sign pattern $\cal A$ such that
$$\mbox{mr}({\cal A})=3 \quad \mbox{and}\quad \mbox{mr}_{\mathbb Q} ({\cal A}) =k? $$

\end{problem}

\end{document}